\numberwithin{equation}{section}
\newtheorem{thm}{Theorem}[section]
\newtheorem{lemma}[thm]{Lemma}
\newtheorem{definition}[thm]{Definition}
\newtheorem{example}[thm]{Example}
\title{Constructing equilibrium  states for Smale spaces}
\author{David Parmenter and Mark Pollicott
\thanks{
The second author is partly
supported by ERC-Advanced Grant 833802-Resonances and EPSRC grant
EP/T001674/1}
}
\begin{document}

\maketitle

\abstract{There are several known constructions of equilibrium states for
H\"older continuous potentials in the context of
both subshifts of finite type and uniformly hyperbolic systems. 
In this article we present another method of building such measures, 
formulated in the unified and more general setting of Smale spaces.
This simultaneously extends the authors' previous work for hyperbolic attractors 
(modelled after Sinai's classical approach for SRB-measures) and gives a new and
original construction of equilibrium states for subshifts of finite type.}


\section{Introduction}

In this article we shall consider the classical problem on the relationship between equilibrium states for different potentials. Moreover for any two H\"older continuous potentials, we shall {\color{black} give} a geometric construction for transforming the Gibbs measure for one potential into the Gibbs measure for the other potential. The construction presents a new way to think about Gibbs measures complementing known constructions using, for example, periodic points \cite{bowen-periodic} or homoclinic points \cite{hr}.

We will work in the general setting of Smale spaces. Recall that a uniformly hyperbolic diffeomorphism has a local product structure by local stable and unstable manifolds (see \cite{bowen-SLN}).
A Smale space is an extension of the uniformly hyperbolic diffeomorphisms in the sense that we \textcolor{black}{only} have a compact metric space $X$ and a homeomorphism $f:X \rightarrow X$ satisfying a local product structure determined by \textcolor{black}{an appropriate} bracket map $[\cdot,\cdot] : X \times X \rightarrow X$.
Additionally, subshifts of finite type are a class of examples of Smale spaces
and our work provides a unified approach to equilibrium states covering both uniformly hyperbolic diffeomorphisms and subshifts of finite type \textcolor{black}{without any use of Markov partitions}.

To describe our construction, let $G_1$ be a H\"older continuous potential and consider a measure $\mu_{G_1}^u = \mu_{x,G_1}^u$ supported on a piece of unstable manifold $W_\delta^u(x)$ with the conditional Gibbs property \textcolor{black}{defined in \textsection \ref{Ces section}}. Intuitively the conditional Gibbs property  {\color{black} gives} a uniform bound on the measure of unstable Bowen balls  {\color{black} of the form}
\begin{equation*}
    \frac{\mu_{G_1}^u(B_{d_u}(y,n,\epsilon))}{e^{S_nG_1(y) - nP(G_1)}},
\end{equation*}
where $y \in W_\delta^u(x)$, $\epsilon > 0$ small, $S_nG_1(x) = \sum_{k=0}^{n-1} G_1(f^ix)$, 
{\color{black} $P(G_1)$ is the pressure and}
$B_{d_u}(y, n, \epsilon)$ denotes the Bowen ball in $W^u(x)$ with respect to the unstable metric on $W^u(x)$.

We can now {\color{black} give a brief overview of} our construction for Smale spaces. Starting from a conditional Gibbs measure for
{\color{black} a H\"older continuous function}
 $G_1$.   We {\color{black} then} define a 
  sequence of reference measures which are absolutely continuous with respect to $\mu_{G_1}^u$ and have the appropriately chosen density $e^{S_nG_2(y) - S_nG_1(y)}$ for a continuous $G_2$. Taking averaged pushforwards of the sequence of reference measures, the  weak* convergent limits are equilibrium states for $G_2$. The precise statement can be found in Theorem \ref{transform main}. 
  One way to view Theorem \ref{transform main} is as a geometric method
which transforms the Gibbs measure for $G_1$ into the Gibbs measure for $G_2$. 
\textcolor{black}{The illustrative Example \ref{ssft example} provides an explicit calculation demonstrating the transformation of the $(1/2,1/2)$-Bernoulli measure into the $(p,1-p)$-Bernoulli measure using Theorem \ref{transform main}   for the full shift on two symbols.}




{\color{black} 
Theorem \ref{transform main} can also be viewed as a new way to construct the equilibrium state for any  continuous  function $G_2$ starting  from the  equilibrium state for a  reference  H\"older potential $G_1$. }
\textcolor{black}{Moreover, Theorem \ref{transform main} extends the construction in \cite{PaPo}, where the authors study uniformly hyperbolic attractors and therefore exhibit the important property that there is an induced volume on unstable manifolds, to non-attracting uniformly hyperbolic systems.} 





The proof of Theorem \ref{transform main} relies on a growth estimate on a piece of unstable manifold which relates the pressure of two continuous potentials $G_1$ and $G_2$. {\color{black} This result is of independent interest and its} statement  can be found in Lemma \ref{pressure unstab}.  



\section{Definitions}

We now state the definition of a Smale space which is based on  \textsection 7  in  {\color{black} Ruelle's book}
 \cite{ruelle2004thermodynamic}. The definition has multiple technical conditions so we provide a couple of enlightening examples that illustrate these conditions. 

Let $X$ be a non-empty compact metric space with metric $d$. Assume there is an $\epsilon > 0$ and a map, $[\cdot , \cdot]$ with the following properties:
\begin{equation*}
[\cdot , \cdot] : \{ (x,y) \in X \times X \text{ : } d(x,y)<\epsilon\} \rightarrow X
\end{equation*}
is a continuous map such that $[x,x] = x$ and 
\begin{align*}
[[x,y] , z] & = [x,z], \tag{SS1} \\
[x,[y,z]] & =[x,z], 	\tag{SS2}		\\
f([x,y]) & = [f(x),f(y)],	\tag{SS3}
\end{align*}
when the two sides of these relations are defined.

Additionally, we require the existence of a constant $0 < \lambda < 1$ such that, for any $x \in X$ we have the following two conditions:
For $y,z \in X$ such that $d(x,y),d(x,z) < \epsilon$ and $[y,x] = x = [z,x]$, we have
\begin{equation*}
d(f(y),f(z)) \leq \lambda d(y,z); \tag{SS4}
\end{equation*}
and for $y,z \in X$ such that $d(x,y),d(x,z) < \epsilon$ and $[x,y] = x = [x,z]$, we have
\begin{equation*}
d(f^{-1}(y),f^{-1}(z)) \leq \lambda d(y,z). \tag{SS5}
\end{equation*}

\begin{definition} \label{smaledef}
Let $X$ be a compact metric space with metric $d$. Let $f : X \rightarrow X$ be a homeomorphism and $[\cdot , \cdot]$ have the properties $SS1-SS5$ above. Then we define the  Smale space to be the quadruple $(X, d, f, [\cdot , \cdot])$.
If $f : X \rightarrow X$ is also topological mixing then we call $(X, d, f, [\cdot , \cdot])$ a mixing Smale space.
\end{definition}

In essence Smale spaces are systems that exhibit a local product structure given by $[\cdot,\cdot]$ and   this product structure 
can be used to define local stable and unstable manifolds.

\begin{definition} \label{def:unstable}
For   sufficiently small $\delta > 0$ one can define the stable and unstable manifolds through $x \in X$ by 
    \begin{align*}
        W_\delta^s(x) & = \{y \in X \text{ : } y = [x,y] \text{ and } d(x,y)<\delta\}, \\
        W_\delta^u(x) & = \{y \in X \text{ : } y = [y,x] \text{ and } d(x,y)<\delta\}.
    \end{align*}
\end{definition}

From $SS4$ and $SS5$ we have that the stable and unstable manifolds are equivalently characterised in terms of the behaviour of forward and backward orbits,

$$
W_\delta^{s}(x) = \{y \in X \hbox{ : } d(f^nx, f^ny) \leq \delta, \forall n \geq 0\},
$$
and 
$$
W_\delta^{u}(x) = \{y \in X \hbox{ : } d(f^{-n}x, f^{-n}y) \leq \delta, \forall n \geq 0\}.
$$

\subsection{Examples}

The conditions $SS1 - SS5$ are perhaps best understood with illustrating examples, namely hyperbolic diffeomorphisms and subshifts of finite type.

\subsubsection{Locally maximal hyperbolic diffeomorphisms}

Let $f: M \to M$ be a $C^{1+\alpha}$ diffeomorphism on a compact Riemannian manifold and let $X \subset M$ be a closed $f$-invariant set.
\begin{definition} \label{hypdiffdef}
The map 
 $f:X \to X$ is called a (locally maximal) hyperbolic diffeomorphism if: 
\begin{enumerate}
\item 
there exists a continuous splitting $T_X M = E^s\oplus E^u$ and $C > 0$ and $0< \lambda < 1$ such that 
$$\|Df^n|E^s\| \leq C \lambda^n \hbox{ and }\|Df^{-n}|E^u\| \leq C \lambda^n$$ for $n \geq 0$; 
\item
there exists an open neighbourhood U of X such that $X=\cap_{n \in \mathbb{Z}} f^n(U)$;
\end{enumerate}
\end{definition}



The unstable manifold theory due to Hirsch and Pugh in \cite{hp} shows that uniformly hyperbolic systems are in fact Smale spaces.

\subsubsection{Subshifts of finite type} \label{SSFT smale ex}


Let $A$ be a $k \times k$ matrix with entries consisting of zeros and ones and let $A(i,j)$ denote the $(i,j)$th entry of $A$.

\begin{definition} \label{SSFTdef}
We define the one and two sided shift space $\Sigma_A^+$ and $\Sigma_A$, respectively,  by
\begin{align*}
\Sigma_A^+ & = \{\underline{x} = (x_n)_{0}^{\infty} \in \{1,\dots,k\}^{\mathbb{Z}^+} \hbox{ : } A(x_n,x_{n+1}) = 1, n \in \mathbb{Z}^+\}, \\
\Sigma_A & = \{\underline{x} = (x_n)_{-\infty}^{\infty} \in \{1,\dots,k\}^{\mathbb{Z}} \hbox{ : } A(x_n,x_{n+1}) = 1, n \in \mathbb{Z}\}.
\end{align*}
Define the two (one) sided shift map, $\sigma : \Sigma_A \rightarrow \Sigma_A$ ($\sigma : \Sigma_A^+ \rightarrow \Sigma_A^+$) by $\sigma(x_n) = x_{n+1}$. 
\end{definition}

When $A(i,j)=1$ \textcolor{black}{for all $i,j \in \{1,\dots, k\}$,} these are called  full shifts.

For $\lambda \in (0,1)$  there is a metric on $\Sigma_A$ defined by
$d(\underline{x}, \underline{y}) = \lambda^k$
where $k = \inf\{|n| \text{ : } x_n \neq y_n\}$ 
(and on $\Sigma_A^+$ there is a metric 
$d(\underline{x}, \underline{y}) = \lambda^k$
where $k = \inf\{n \text{ : } x_n \neq y_n\}$).


\begin{definition}
For each $m,n \in \mathbb{N}$, we denote by 
\begin{equation*}
[i_{-m},\dots,i_{n}] = \{\underline{x} = (x_n)_{-\infty}^{\infty} \in \Sigma_A \hbox{ : } x_{-m} = i_{-m}, \ldots, x_{n} = i_{n}\}
\end{equation*}
a  {cylinder} in $\Sigma_A$   where $i_{-m}, \cdots, i_{n} \in \{1, \cdots, k\}$ and $A(i_j, i_{j+1})=1$ for $-m \leq j \leq n-1$.
Similarly, for each $n \in \mathbb{N}$, we denote by 
\begin{equation*}
[i_0,\dots,i_{n}] = \{\underline{x} = (x_n)_{0}^{\infty} \in \Sigma_A^+ \hbox{ : } x_0 = i_0, \ldots, x_{n} = i_{n}\}
\end{equation*}
a  \emph{cylinder} in $\Sigma_A^+$ of length $n$ where $i_0, \cdots, i_{n} \in \{1, \cdots, k\}$ and $A(i_j, i_{j+1})=1$ for $0 \leq j \leq n-1$.
\end{definition}

For two sequences, $\underline{x}, \underline{y} \in \Sigma_A$ such that $x_0 = y_0$, the product map $[\cdot,\cdot]$ is given by $[\underline{x},\underline{y}] = (\dots, y_{-2}, y_{-1}, x_0, x_1, x_2, \dots)$.

For the subshift of finite type an unstable manifold through $\underline{x} \in \Sigma_A$ is simply the elements of $\Sigma_A$ which have the same past as $\underline{x}$. We will denote $\underline{x}^-$ by the sequences that have the past, $(x_n)_{-\infty}^{0}$ {\color{black} i.e., the terms agree for indices $n \leq 0$}. Stable manifolds are similarly defined with a fixed future  {\color{black} i.e., the terms agree for indices $n \geq 0$}.


\section{Constructing equilibrium states} \label{Ces section}

{\color{black}
We begin by recalling the following standard definition.

\begin{definition} \label{eqst def}
Given a continuous function $G:X \to \mathbb R$ 
$$
P(G) := \sup\left\{ h(\mu, f) + \int G d\mu \hbox{ : } \mu = \hbox{$f$-invariant probability}\right\}
$$
is the \emph{pressure} of $G$, where $ h(\mu, f)$ denotes the entropy of $\mu$.
Any measure realizing this supremum is called an \emph{equilibrium state} for $G$.
\end{definition}
\textcolor{black}{For Smale spaces every continuous potential G has} at least one equilibrium state \cite{walters}.  If $G$ is H\"older continuous then the equilibrium state is unique \cite{ruelle2004thermodynamic}.
}


We require the  following notion of a conditional Gibbs property.
\begin{definition} \label{condgibbs}
 For $y \in \textcolor{black}{W_\delta^u(x)}$, $0 < \epsilon < \delta$ and $n \in \mathbb{N}$
we define the  \emph{unstable Bowen ball} of radius $\epsilon$ by
 $$B_{d_u}(y,n,\epsilon) 
 = \{z \in W^u(x) \hbox{ : } d_u(f^iz, f^iy) < \epsilon \hbox{ for } 0 \leq i \leq n-1\}
 $$ be the Bowen ball around $y \in W_\delta^u(x)$ in the induced unstable metric $d_u$ on $W_\delta^u(x)$. 

Let $\mu^u$ be a measure supported on a piece of unstable manifold \textcolor{black}{centred at $x$}. We say that it has the conditional Gibbs property for $G$ if for every small $\epsilon > 0$ there is a constant $K = K(\epsilon) > 0$ such that, for every $y \in W_\delta^u(x)$ and $n \in \mathbb{N}$ we have,
\begin{equation*}
K^{-1} \leq \frac{\mu^u(B_{d_u}(y, n, \epsilon))}{e^{S_nG(y) - nP(G)}} \leq K.
\end{equation*}
We write $\mu^u = \mu_{G}^u$ if this conditional property holds. We may also write $\mu_{x,G}^u$ when we need to emphasis the measure is supported on a piece of unstable manifold centred at $x$.
\end{definition}

\begin{example}
    \textcolor{black}{Let $f:X \rightarrow X$ be a uniformly hyperbolic diffeomorphism. It is shown by Leplaideur \cite{leplaideur2000local} that equilibrium states for H\"older continuous potentials have a local product structure (see Definition 2.13 \cite{CPZ-2}). Therefore, equilibrium states for H\"older potentials have conditional measures on unstable manifolds that satisfy the conditional Gibbs property.}
\end{example}

\begin{example}
    \textcolor{black}{Consider the two sided subshift of finite type $\sigma_A : \Sigma_A \rightarrow \Sigma_A$. Bowen \cite{bowen-SLN} shows we can replace $G_1$ acting on $\Sigma_A$ by a homologous $G_1^{'}$ which only depends on $(x_i)_{i=0}^\infty$ without any change to the Gibbs measure $\mu_{G_1}$. We can then define a continuous function $G_1^+$ on $\Sigma_A^+$ to be equal to $G_1^{'}$. The Gibbs measure for $G_1^+$ on the one sided subshift of finite type restricted to the sequences, $\underline{y} \in \Sigma_A^+$ such that $x_0 = y_0$ and $A(x_{0}, y_1) =1$ has the conditional Gibbs property for $G_1$.}
\end{example}


\subsection{The main construction}
We are now ready to state the main construction of this section.

\begin{thm} \label{transform main}
Let $(X,d,f,[\cdot,\cdot])$ be a topologically mixing Smale space.
Let  $G_{1} : X \rightarrow \mathbb{R}$   be a 
H\"older continuous potential and let 
 $G_{2} : X \rightarrow \mathbb{R}$ be 
a continuous potential. For $\mu_{G_1}$ a.e. $x \in X$ and $\delta>0$ small, we can define a family of  measures supported on $W_\delta^u(x)$ by 
\begin{equation} \label{lambda smale}
\lambda_{n,G_2 - G_1}(A) = \frac{\int_{W_\delta^u(x)\cap A} e^{S_nG_2(y) - S_nG_1(y)} d\mu_{G_1}^u(y)}{\int_{W_\delta^u(x)} e^{S_nG_2(y) - S_nG_1(y)} d\mu_{G_1}^u(y)}, \quad n \geq 1, 
\end{equation}
where $A \subset X$ a measurable set.
Then the   
{\color{black} measures}
\begin{equation} \label{mu_n smale}
\mu_{n,G_2 - G_1} = \frac{1}{n} \sum_{i=0}^{n-1} f^i_* \lambda_{n,G_2 - G_1}, \quad n \geq 1, 
\end{equation}
supported on $f^n W_\delta^u(x)$
have  weak star accumulation  points  which are   equilibrium measures for $G_2$.
Moreover, when $G_2$ is a H\"older function then $\mu_{n,G_2 - G_1}$ converges to the unique equilibrium state 
$\mu_{G_2}$.
\end{thm}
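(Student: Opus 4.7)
The plan is to reinterpret the measures $\lambda_{n,G_2-G_1}$ as Gibbs-like weightings of unstable Bowen balls, so that the whole construction becomes an unstable-leaf analogue of Bowen's classical equilibrium-state construction from $(n,\epsilon)$-separated sets. Fix a small $\epsilon>0$ and, for each $n$, pick a maximal $(n,\epsilon)$-separated set $E_n \subset W_\delta^u(x)$ whose associated unstable Bowen balls $B_{d_u}(y,n,\epsilon)$ cover $W_\delta^u(x)$ with bounded multiplicity. Since $G_1$ is H\"older, the Birkhoff sum $S_nG_1$ varies by at most a constant on each such ball, so the conditional Gibbs property of $\mu_{G_1}^u$ gives
\begin{equation*}
\int_{W_\delta^u(x)} e^{S_nG_2(y)-S_nG_1(y)} \, d\mu_{G_1}^u(y) \;\asymp\; e^{-nP(G_1)} \sum_{y \in E_n} e^{S_nG_2(y)}.
\end{equation*}
Lemma \ref{pressure unstab} then identifies the exponential growth rate of this sum as $nP(G_2)$, and hence $\lambda_{n,G_2-G_1}(B_{d_u}(y,n,\epsilon))$ is uniformly comparable to $e^{S_nG_2(y) - nP(G_2)}$, which is precisely a Gibbs-type weight for $G_2$.

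Invariance of any weak* accumulation point $\mu$ of $\{\mu_{n,G_2-G_1}\}$ follows from the usual telescoping identity
\begin{equation*}
\int \phi \circ f \, d\mu_{n,G_2-G_1} - \int \phi \, d\mu_{n,G_2-G_1} = \frac{1}{n}\left( \int \phi \circ f^n \, d\lambda_{n,G_2-G_1} - \int \phi \, d\lambda_{n,G_2-G_1} \right),
\end{equation*}
which tends to $0$ as $n \to \infty$ for every $\phi \in C(X)$. The nontrivial direction is to show $h_\mu(f) + \int G_2 \, d\mu \geq P(G_2)$, since the variational principle gives the reverse inequality automatically. I would apply a Misiurewicz-style argument: fix a finite partition $\mathcal{P}$ of $X$ with diameter less than $\epsilon/2$ and $\mu$-null boundary, and bound $H_{\mu_{n,G_2-G_1}}(\bigvee_{i=0}^{m-1} f^{-i}\mathcal{P}) + m \int G_2 \, d\mu_{n,G_2-G_1}$ below by $mP(G_2) - o(n)$ using the Gibbs-like weights from the first step (each atom of the joint partition intersects a controlled number of Bowen balls, converting the entropy sum into a pressure sum). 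Letting $n \to \infty$ along the convergent subsequence, dividing by $m$, letting $m \to \infty$, and finally refining $\mathcal{P}$, yields $h_\mu(f) + \int G_2 \, d\mu \geq P(G_2)$, so $\mu$ is an equilibrium state for $G_2$.

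When $G_2$ is H\"older continuous, Ruelle's uniqueness of equilibrium states on mixing Smale spaces forces every weak* accumulation point to coincide with $\mu_{G_2}$, and hence the full sequence $\mu_{n,G_2-G_1}$ converges. The main obstacle I anticipate is the Misiurewicz-style entropy lower bound: because $\mu_{n,G_2-G_1}$ is itself a convex combination of pushforwards $f^i_*\lambda_{n,G_2-G_1}$ living on different leaves $f^i W_\delta^u(x)$, concavity of the entropy functional is needed to reduce the estimate to a bound on a single $\lambda_{n,G_2-G_1}$ before the Bowen-ball weights can be plugged in. A related technical subtlety is ensuring that the H\"older variation of $G_1$, the bounded covering multiplicity of the Bowen balls, and the implicit constants from Lemma \ref{pressure unstab} are absorbed uniformly in $n$ and $\epsilon$, so that the two successive limits $n \to \infty$ and $\epsilon \to 0$ do not interact badly.
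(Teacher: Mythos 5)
Your proposal follows essentially the same route as the paper's proof: invariance of weak* accumulation points via the telescoping estimate, a lower bound for the entropy of the leaf measures plus $n\int G_2\,d\mu_{n,G_2-G_1}$ in terms of $\log Z_n := \log\int_{W_\delta^u(x)}e^{S_n(G_2-G_1)(y)}d\mu^u_{G_1}(y)$ using the conditional Gibbs property, Misiurewicz's entropy inequality to transfer this to the averaged measures, Lemma \ref{pressure unstab} to identify $\tfrac{1}{n}\log Z_n \to P(G_2)-P(G_1)$, the variational principle to conclude, and uniqueness for H\"older $G_2$ to upgrade accumulation to convergence. The only real difference is packaging: you phrase the key estimate through maximal $(n,\epsilon)$-separated sets and unstable Bowen balls, whereas the paper works directly with the atoms of $\bigvee_{i=0}^{n-1}f^{-i}\mathcal{P}$ for a partition of small diameter and null boundary; these are interchangeable devices. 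One caution: your intermediate claim that $\lambda_{n,G_2-G_1}(B_{d_u}(y,n,\epsilon))$ is \emph{uniformly} comparable to $e^{S_nG_2(y)-nP(G_2)}$ is stronger than what is available. Since $G_2$ is only continuous, $S_nG_2$ varies by $n\tau$ (not by a constant) on Bowen balls, and Lemma \ref{pressure unstab} gives only the exponential growth rate of the normalizing integral, so the comparison factors are of size $e^{n\tau+o(n)}$ rather than uniform in $n$; in particular the constants cannot be ``absorbed uniformly in $n$'' as you hope, only at the level of exponential growth rates. This does not derail your plan, because the Misiurewicz-style computation needs the weights only up to such sub-exponential errors --- which is exactly how the paper tracks them (terms $\log K + 3n\tau$, with $\tau\to 0$ taken at the very end) --- but the uniform Gibbs-weight statement itself should not be asserted.
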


{\color{black} 
\begin{example}
In
 the case where $f:X \rightarrow X$ is a mixing hyperbolic attractor and
  $G_1 = \varphi^{geo}$ is the geometric potential 
  then $\mu_{G_1}$ is the SRB measure, \textcolor{black}{$\mu^u_{G_1}$ is the induced volume on $W_\delta^u(x)$} and  Theorem \ref{transform main} recovers Theorem 1.2 in \cite{PaPo}. 
  \end{example}
  }

Next we will see an illuminating example illustrating Theorem \ref{transform main}. We consider the full shift on two symbols and begin with a constant potential corresponding to the $(\frac{1}{2}, \frac{1}{2})$-Bernoulli measure. Fixing $p \in (0,1)$ $( \neq 1/2)$ we show with an explicit calculation of $\mu_{n,G_2-G_1}$ that using  Theorem \ref{transform main}  we can transform the $(\frac{1}{2}, \frac{1}{2})$-Bernoulli measure into the $(p,1-p)$-Bernoulli measure. 

The $(\frac{1}{2}, \frac{1}{2})$-Bernoulli measure is a very well understood equilibrium state for the two sided subshift of finite type. Theorem \ref{transform main} can be used to explicitly calculate the measure of cylinders for the equilibrium state of any other H\"older potentials. 

\begin{example} \label{ssft example}
Let $X = \{0,1\}^{\mathbb Z}$
and let $\sigma: X \to X$ be the  full shift on two symbols given by $\sigma(x_n)_{n \in \mathbb Z} = (x_{n+1})_{n \in \mathbb Z} $.  
Let $G_1: X \to \mathbb R$ be the constant function $G_1=-\log 2$, then the associated unique equilibrium measure is the Bernoulli measure $\mu_{G_1}  = \left( \frac{1}{2}, \frac{1}{2}\right)^{\mathbb Z}$.
For $p \in (0,1)$ not equal to $1/2$, we shall consider the locally constant  potential, $G_2 : X \rightarrow \mathbb{R}$ defined at $x = (x_n)_{n=-\infty}^{+\infty}$ by
\begin{equation*}
 G_2(x) =
    \begin{cases}
      \log p & x_0 = 0\\
      \log (1-p) & x_0 = 1.
    \end{cases}       
\end{equation*}
Then the unique equilibrium measure associated to $G_2$  is the Bernoulli measure $\mu_{G_2}  = \left( p, 1-p\right)^{\mathbb Z}$.
Given any   point  $x = (x_n)_{n=-\infty}^\infty \in X$, 
$$
W_\delta^u(x) = \{ y = (y_n)_{n=-\infty}^\infty \hbox{ : }  y_i = x_i \hbox{ for } i \leq -1\}
$$
and we can identify
 $W_\delta^u(x) = \{x_{-} \} \times X^+$ where $X^+ = \{0,1\}^{\mathbb Z_+}$ and $x_{-}  = (x_n)_{n=-\infty}^{-1}$.
 The conditional measure $\mu_{G_1}^u$ on $X$ corresponds to the Bernoulli measure $\left( \frac{1}{2}, \frac{1}{2}\right)^{\mathbb Z_+}$ on $X^+$.
We can explicitly write 
\begin{align}\label{phi2-phi1}
e^{S_{n} G_2(y) - S_{n} G_1(y)} & = 
\frac{1}{2^n}p^{\#\{0 \leq i \leq n-1 \text{ : } y_i = 0\}} (1-p)^{\#\{0 \leq i \leq n-1 \text{ : } y_i = 1\}} \nonumber\\
								& = \frac{\mu_{G_2}[y_0, \dots, y_{n-1}]}{\mu_{G_1}^u[y_0, \dots, y_{n-1}]}	.
\end{align}
where we recall, $[y_0, \cdots, y_{n-1}] = \{(z_k)_{k=-\infty}^\infty \hbox{ : } z_i = y_i \hbox{ for } 0 \leq i \leq n-1\}$.
By  the definition of $\lambda_n$ we have that 
\begin{equation} \label{sigma lambda}
\sigma^{i}_* \lambda_{n}(A)
 = \frac{
 \int_{\sigma^{-i}A \cap W_\delta^u(x)}
 e^{S_{n}G_2(y)-S_{n}G_1(y)} 
  d\mu^u_{G_1}(y)
 }{
 \int_{W_\delta^u(x)} e^{S_{n}G_2(y) - S_{n}G_1(y)} d\mu^u_{G_1}(y)
 }
\end{equation}
where we have the simplifications, $P(G_1) = P(G_2)=0$ and
\begin{align*}
\int_{W_\delta^u(x)} e^{S_{n}G_2(y)-S_{n}G_1(y)}  d\mu^u_{G_1}(y)& = 
 \sum_{[y_0, \dots, y_{n-1}]}  \mu_{G_1}^u([y_0, \dots, y_{n-1}])
 \frac{\mu_{G_2}[y_0, \dots, y_{n-1}])}{\mu_{G_1}^u[y_0, \dots, y_{n-1}])} \\
 & = 1.
\end{align*}

Consider the set $A=[z_{-M}, \dots z_{-1}, z_0, z_1, \dots, z_N]$, for $M,N \in \mathbb{N}$. We will calculate $\sigma^i_*\lambda_n(A)$ for $n \in \mathbb{N}$ and $n \gg  N + M$. Notice that for $i \geq M$,
\begin{equation*}
    \sigma^{-i}(A) = \bigcup_{[y_0, \dots, y_{i-M-1}]} [y_0, \dots, y_{i-M-1}, z_{-M}, \dots, z_N].
\end{equation*}

We have that $S_nG_1$  and $S_nG_2$ are  constant on $[y_0, \dots, y_{n-1}]$ so we can rewrite the integral in equation (\ref{sigma lambda}) as a sum over the cylinders of the same length. For ease of reading, when the intersection is non-empty, let 
\begin{align*}
    \sigma^{-i}(A) \cap [y_0, \dots, y_{n-1}] & = [y_0, \dots, y_{i-M-1}, z_{-M}, \dots z_N, y_{i+N+1}, \dots, y_{n-1}], \\
    & =: \sigma^{-i}_{y_0,\dots,y_{n-1}}(A).
\end{align*}
for $M \leq i < n-N$.  We can now simplify equation (\ref{sigma lambda}) using equation (\ref{phi2-phi1}) as follows.
\begin{align*}
    \sigma^{i}_* \lambda_{n}(A) & = \sum_{\sigma^{-i}_{y_0,\dots,y_{n-1}}(A)} \mu_{G_1}^u(\sigma^{-i}_{y_0,\dots,y_{n-1}}(A)) \frac{\mu_{G_2}(\sigma^{-i}_{y_0,\dots,y_{n-1}}(A))}{\mu_{G_1}^u(\sigma^{-i}_{y_0,\dots,y_{n-1}}(A))} \\
        & = \sum_{\sigma^{-i}_{y_0,\dots,y_{n-1}}(A)} \mu_{G_2}(\sigma^{-i}_{y_0,\dots,y_{n-1}}(A)) \\
        & = \sum_{[y_0,\dots,y_{n-1}]} \mu_{G_2}(\sigma^{-i}(A) \cap [y_0, \dots, y_{n-1}]) \\
        & = \mu_{G_2}(A).
\end{align*}

Therefore,
\begin{align*}
\mu_n(A) & = \frac{1}{n} \sum_{i=0}^{n-1}  \sigma^{i}_* \lambda_{n}(A) \\
        & = \frac{1}{n} \sum_{i=0}^{M-1}  \sigma^{i}_* \lambda_{n}(A) + \frac{1}{n} \sum_{i=M}^{n-N-1}  \sigma^{i}_* \lambda_{n}(A) +  \frac{1}{n} \sum_{i=n-N}^{n-1}  \sigma^{i}_* \lambda_{n}(A) \\
        & = \frac{1}{n} \sum_{i=0}^{M-1}  \sigma^{i}_* \lambda_{n}(A) + \frac{n-(N+M)}{n} \mu_{G_2}(A) + \frac{1}{n} \sum_{i=n-N}^{n-1}  \sigma^{i}_* \lambda_{n}(A) \\
        & \xrightarrow{n\rightarrow\infty} \mu_{G_2}(A).
\end{align*}
This is consistent with Theorem \ref{transform main}, we have practised alchemy, transforming $\mu_{G_1}$ into $\mu_{G_2}$.
\end{example}

This example also hints at an interesting feature. In the construction of the SRB measure for hyperbolic attractors \cite{ruelle} 
there is no need to average the pushforwards of the induced volume on $W_\delta^u(x)$. Example \ref{ssft example} shows that even for the full shift on two symbols, there is a continuous potential such that $\sigma^n_*\lambda_n$ does not converge to the required equilibrium state. This can be seen with the following calculation. 
\begin{align*}
    \sigma^{n}_* \lambda_{n}(A) & = \sum_{[y_0,\dots,y_{n-M-1}]} \mu_{G_1}^u([y_0,\dots,y_{n-M-1}, 
    z_{-M}, \cdots, z_N
    ]) \frac{\mu_{G_2}([y_0,\dots,y_{n-M-1}
        z_{-M}, \cdots, z_N
    ])}{\mu_{G_1}^u([y_0,\dots,y_{n-M-1}
        z_{-M}, \cdots, z_N
    ])} \\
            & = \mu_{G_1}^u([z_0,\dots,z_N]) \mu_{G_2}([z_{-M},\dots,z_{-1}]) \\
            & \neq \mu_{G_2}(A).
\end{align*}

It is an interesting question to ask whether the averaging in (\ref{mu_n smale}) is required in the setting of uniformly hyperbolic attractors. Answering this would have important consequences for the rate of convergence to the equilibrium state for $G_2$.

\section{Growth of unstable manifolds for Smale spaces} \label{growthsmalesec}
The proof of Theorem \ref{transform main} relies on  {\color{black} the following} growth rate result of unstable manifolds.




\begin{lemma} \label{pressure unstab}
Let $(X,d,f,[\cdot,\cdot])$ be a mixing Smale space. Let $G_{1} : X \rightarrow \mathbb{R}$ H\"older and $G_{2} : X \rightarrow \mathbb{R}$ continuous. For a.e.($\mu_{G_1}$)  $x \in X$ and $\delta > 0$ sufficiently small, 
\begin{equation*}
P(G_2) - P(G_1) =  \lim_{n \rightarrow \infty} \frac{1}{n} \log \int_{W_\delta^u(x)} e^{S_n(G_2 - G_1)(y)} d\mu_{G_1}^u(y).
\end{equation*}
\end{lemma}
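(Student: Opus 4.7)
\smallskip
\noindent\emph{Strategy.} My plan is to discretise $W_\delta^u(x)$ using $(n,\epsilon)$-separated sets, invoke the conditional Gibbs property of $\mu_{G_1}^u$ to cancel the $e^{-S_n G_1}$ factor in the integrand against the Gibbs weight of the corresponding unstable Bowen ball, and then identify the exponential growth rate of the resulting ``unstable partition function'' with the pressure $P(G_2)$. The point is that $\mu_{G_1}^u$ is precisely the reference measure that makes this cancellation exact up to bounded distortion.

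\smallskip
\noindent\emph{Gibbs-to-partition-function step.} Fix small $\epsilon>0$ and let $E_n\subset W_\delta^u(x)$ be a maximal $(n,\epsilon)$-separated set in the unstable metric $d_u$, so that the balls $\{B_{d_u}(y,n,\epsilon):y\in E_n\}$ cover $W_\delta^u(x)$ while the half-radius balls are pairwise disjoint. H\"older continuity of $G_1$ furnishes the standard Bowen distortion bound $|S_nG_1(z)-S_nG_1(y)|\leq C_{G_1}$ on each such ball, and uniform continuity of $G_2$ gives $|S_nG_2(z)-S_nG_2(y)|\leq n\,\omega_{G_2}(\epsilon)$, where $\omega_{G_2}$ is the modulus of continuity of $G_2$. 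Combining these with the conditional Gibbs inequality
\[
K^{-1}e^{S_nG_1(y)-nP(G_1)} \leq \mu_{G_1}^u\bigl(B_{d_u}(y,n,\epsilon)\bigr) \leq K\,e^{S_nG_1(y)-nP(G_1)}
\]
and summing over $E_n$ yields, after taking $\tfrac{1}{n}\log$,
\[
\frac{1}{n}\log \int_{W_\delta^u(x)} e^{S_n(G_2-G_1)(y)}\,d\mu_{G_1}^u(y) \;=\; -P(G_1) \;+\; \frac{1}{n}\log Z^u_n(G_2,\epsilon) \;+\; O(\omega_{G_2}(\epsilon)),
\]
where I have set $Z^u_n(G_2,\epsilon):=\sum_{y\in E_n}e^{S_nG_2(y)}$ and the error is uniform in $n$.

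\smallskip
\noindent\emph{Identification with pressure and main obstacle.} It remains to show $\tfrac{1}{n}\log Z^u_n(G_2,\epsilon)\to P(G_2)$ up to a term $O(\omega_{G_2}(\epsilon))$ that vanishes as $\epsilon\to 0$. The upper bound is immediate, since $E_n$ is also $(n,\epsilon)$-separated in $X$ and hence $Z^u_n(G_2,\epsilon)$ is dominated by Bowen's standard separated-set partition function for $G_2$. The step I expect to be the main obstacle is the matching lower bound $\liminf_n \tfrac{1}{n}\log Z^u_n(G_2,\epsilon)\geq P(G_2)-O(\omega_{G_2}(\epsilon))$. For this I would first use topological mixing, together with the uniform expansion axiom $SS5$, to produce $N=N(\epsilon)$ such that $f^N(W_\delta^u(x))$ is $\epsilon$-dense in $X$; then, given a maximal $(n,3\epsilon)$-separated subset $F_n\subset X$, I would use the bracket $[\cdot,\cdot]$ and contraction axiom $SS4$ to $\epsilon$-shadow each $y\in F_n$ by a point $y'\in f^N(W_\delta^u(x))$ lying on its local stable manifold. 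The shadowed family is then $(n,\epsilon)$-separated in $f^N(W_\delta^u(x))$, its $e^{S_nG_2}$-weights differ from those on $F_n$ by at most a factor $e^{n\omega_{G_2}(\epsilon)}$, and pulling back through $f^{-N}$ transfers the estimate into $W_\delta^u(x)$ at the cost of only a bounded multiplicative constant. Sending $n\to\infty$ and then $\epsilon\to 0$ finishes the proof. Carrying out this shadowing step rigorously from the bare Smale space axioms — essentially extracting a specification-type statement from mixing together with the local product structure, without appeal to Markov partitions — is the point I expect to require the most careful bookkeeping.
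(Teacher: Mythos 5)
Your proposal is correct and follows essentially the same route as the paper: the conditional Gibbs property plus the distortion estimates convert the unstable integral into a partition function over $(n,\epsilon)$-separated points of the leaf, and topological mixing is used to push the leaf forward until it is $\epsilon$-dense so that local stable manifolds transfer the global pressure onto it, before pulling back by $f^{-N}$ at a cost $e^{N\|G_2\|_\infty}$ that is harmless for fixed $\epsilon$. The step you flag as the main obstacle is not a specification-type statement but is exactly the paper's Lemma \ref{cover0}, namely $X=\bigcup_{y\in f^mW^u_\delta(x)}W^s_\epsilon(y)$, an immediate consequence of mixing and the bracket; given it, your stable-manifold shadowing of an $(n,3\epsilon)$-separated set is a one-line triangle-inequality argument, and the paper carries out the same transfer phrased instead through an $(n,2\epsilon)$-spanning set built from the pushed-forward leaf.
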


Before we prove Lemma \ref{pressure unstab}, we recall  the following simple  property.

\begin{lemma} \label{Holder Lemma}
Let $G : X \rightarrow \mathbb{R}$ be a continuous potential. For any $\tau > 0$, there is an $\epsilon > 0$ small enough such that, for any $x \in X$ and $n \in \mathbb{N}$, if $d_n(x,y) < \epsilon$ then
\begin{equation}
|S_n G(x) - S_n G(y)| \leq n \tau.
\end{equation}
\end{lemma}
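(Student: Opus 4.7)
The plan is to reduce the bound on the Birkhoff sum to a pointwise bound on $G$, which in turn follows from uniform continuity on the compact space $X$. The Bowen-type assumption $d_n(x,y) < \epsilon$ is really a statement that the orbits of $x$ and $y$ stay $\epsilon$-close along $n$ iterates, so a pointwise modulus of continuity for $G$ translates into an additive error on $S_n G$ that grows at most linearly in $n$.

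First I would note that $X$ is compact and $G:X\to\mathbb R$ is continuous, hence uniformly continuous. So, given $\tau>0$, I choose $\epsilon>0$ such that $d(a,b)<\epsilon$ implies $|G(a)-G(b)|<\tau$ for all $a,b\in X$. This $\epsilon$ is the one claimed in the statement.

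Next, suppose $x,y\in X$ and $n\in\mathbb N$ satisfy $d_n(x,y)<\epsilon$, by which I understand $d(f^i x, f^i y)<\epsilon$ for every $0\le i\le n-1$. Applying the uniform continuity estimate to each pair $(f^i x, f^i y)$ gives $|G(f^i x)-G(f^i y)|<\tau$. Summing over $i$ and using the triangle inequality yields
\begin{equation*}
|S_n G(x) - S_n G(y)| \le \sum_{i=0}^{n-1} |G(f^i x) - G(f^i y)| < n\tau,
\end{equation*}
which is the required bound.

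There is no real obstacle: the statement is exactly the observation that a uniform modulus of continuity for $G$ lifts to a dynamical modulus of continuity for $S_n G$ with the bound scaling linearly in $n$. The only subtlety worth flagging is notational: the dynamical metric $d_n$ (not explicitly defined in the excerpt) is taken to be $d_n(x,y)=\max_{0\le i\le n-1} d(f^i x, f^i y)$, consistent with the Bowen ball notation used earlier. Once this is agreed, the proof is a two-line application of uniform continuity on a compact space.
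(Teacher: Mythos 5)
Your proof is correct and is exactly the standard argument the paper has in mind: the lemma is stated there without proof as a "simple property", and the intended justification is precisely your two-line appeal to uniform continuity of $G$ on the compact space $X$, summed along the orbit, with $d_n$ the Bowen metric $\max_{0\le i\le n-1} d(f^ix,f^iy)$.
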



\textcolor{black}{In the proof of Lemma \ref{pressure unstab} we will use Bowen's definition of the pressure (see for example \cite{walters}) using spanning and separated sets which is equivalent to the definition given in Definition \ref{eqst def} by the variational principle \cite{walters1975variational}.}

\begin{proof}[Proof of Lemma \ref{pressure unstab}]
To get an upper bound on the growth rate in Lemma \ref{pressure unstab} we proceed as follows.  
 Given $\epsilon > 0$ and $n \geq 1$, we want to create an $(n,\kappa \epsilon)$-separated  set for some $\kappa \in (0,1)$ independent of $n$ and $\epsilon$. 
{\color{black} To this end} we can choose a maximal number of points $y_i \in \textcolor{black}{f^n}W_\delta^u(x)$ $(i = 1, \cdots, N= N(n, \epsilon))$
so that $d_u(y_i, y_j) > \epsilon/2$ whenever $i \neq j$ (where $d_u$ is the induced distance on $f^nW_\delta^u(x)$). 
 By the definition of the Smale space, the map  $f^n: W^u_\delta(x) \to f^nW^u_\delta(x)$ is locally  distance expanding and thus, in particular, the points  $x_i = f^{-n}y_i$   ($i=1, \cdots, N = N(n, \epsilon)$)  form an $(n, \kappa \epsilon)$-separated set. 





Now we have constructed $\{x_i\}$, we can {\color{black} relate these points} to an integral. Let $B_{d_u}(y,n,\epsilon)$ denote the Bowen ball contained within the unstable manifold with respect to the induced metric $d_u$, then
\begin{align*}
\sum_{i =1}^N e^{S_nG_2(x_i)} & = \sum_{i =1}^N \int_{B_{d_u}(x_i,n,\epsilon)} e^{S_nG_2(x_i)} \mu_{G_1}^u(B_{d_u}(x_i,n,\epsilon))^{-1} d\mu_{G_1}^u(y), \\
						& \geq e^{-n \tau} \sum_{i =1}^N \int_{B_{d_u}(x_i,n,\epsilon)} e^{S_nG_2(y)} \mu_{G_1}^u(B_{d_u}(x_i,n,\epsilon))^{-1} d\mu_{G_1}^u(y), \\
						& \geq e^{-n\tau+nP(G_1)} K^{-1} \sum_{i =1}^N \int_{B_{d_u}(x_i,n,\epsilon)} e^{S_nG_2(y) - S_nG_1(x_i)} d\mu_{G_1}^u(y), \\
						& \geq e^{- 2n\tau + nP(G_1) } K^{-1} \sum_{i =1}^N \int_{B_{d_u}(x_i,n,\epsilon)} e^{S_nG_2(y) - S_nG_1(y)} d\mu_{G_1}^u(y), \\
						& \geq e^{- 2n\tau + nP(G_1) } K^{-1} \int_{W_\delta^u(x)} e^{S_nG_2(y) - S_nG_1(y)} d\mu_{G_1}^u(y).
\end{align*}
In particular: 
 Line 2 uses Lemma \ref{Holder Lemma} for $G_2$;
Line 3 uses the upper bound of the conditional Gibbs property of $\mu_{G_1}^u$;
Line 4 uses Lemma \ref{Holder Lemma} for $G_1$; and 
Line 5 follows from the maximality of $\{y_i\}$, in particular $W_\delta^u(x) \subset \cup_i B_{d_u}(x_i,n,\epsilon)$.
Then letting $K(n) = e^{-2n\tau} K^{-1}$ gives    
\begin{align*}
\frac{1}{n} \log Z_{1,G_2}(n, \kappa \epsilon) & \geq \frac{1}{n} \log \bigg(K(n)\int_{W_\delta^u(x)} e^{S_nG_2(y) - S_nG_1(y) + nP(G_1)} d\mu_{G_1}^u(y)\bigg).
\end{align*}
Taking a limit as $n \rightarrow \infty$ and $\epsilon \rightarrow 0$,
\begin{align*}
P(G_2) & \geq -2\tau + 	\lim_{n \rightarrow \infty} \frac{1}{n} \log \int_{W_\delta^u(x)} e^{S_nG_2(y) - S_nG_1(y)+ nP(G_1)} d\mu_{G_1}^u(y).
\end{align*}	
Since $\tau > 0$ is arbitrarily small then,
\begin{equation*}
P(G_2)  \geq  \lim_{n \rightarrow \infty} \frac{1}{n} \log \int_{W_\delta^u(x)} e^{S_nG_2(y) - S_nG_1(y) + nP(G_1)} d\mu_{G_1}^u(y).
\end{equation*}

Before starting {\color{black} on the proof of} the lower bound we {\color{black} present a simple}  result.

\begin{lemma}\label{cover0}
\textcolor{black}{F}or any $\epsilon > 0$ there exists \textcolor{black}{an} $m > 0$ such that $f^mW^u_\delta(x)$ 
is $\epsilon$-dense in $X$.  In particular, we can assume that
 $X = \cup_{y \in f^mW^u_\delta(x)} W_\epsilon^s(y).$
\end{lemma}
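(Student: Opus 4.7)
The plan is to combine topological mixing of $f$ with the local product structure. The leaf $W^u_\delta(x)$ is typically nowhere dense in $X$, so mixing cannot be applied to it directly; however, the bracketed ``rectangle''
\begin{equation*}
R_x := [W^u_{\delta'}(x), W^s_{\delta'}(x)] = \{[a,b] : a \in W^u_{\delta'}(x),\, b \in W^s_{\delta'}(x)\}
\end{equation*}
is an open neighbourhood of $x$ for $\delta' \leq \delta$ sufficiently small (by continuity of the bracket at $(x,x)=x$), and mixing applies perfectly well to $R_x$.

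Fix $\epsilon > 0$. First, by compactness I would cover $X$ by finitely many balls $B(z_i, \eta)$, $i = 1, \dots, k$, for a parameter $\eta > 0$ to be chosen. Topological mixing supplies integers $N_i$ with $f^n R_x \cap B(z_i, \eta) \neq \emptyset$ for all $n \geq N_i$, and I set $m = \max_i N_i$. For each $i$, pick $p_i \in R_x$ with $f^m p_i \in B(z_i, \eta)$ and write $p_i = [a_i, b_i]$. A direct application of SS2 gives $p_i = [a_i, [a_i, b_i]] = [a_i, p_i]$, so $p_i \in W^s(a_i)$; iterating SS4 then yields
\begin{equation*}
d(f^m p_i, f^m a_i) \leq \lambda^m d(p_i, a_i) = O(\lambda^m \delta').
\end{equation*}
Choosing $m$ large enough that this bound is less than $\eta$ and setting $\eta = \epsilon/2$ produces $f^m a_i \in B(z_i, \epsilon)$ with $a_i \in W^u_\delta(x)$, establishing the $\epsilon$-density claim.

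For the concluding ``in particular'' assertion I would re-run the same argument with the density parameter $\eta$ taken much smaller than $\epsilon$. Given any $z \in X$, the improved density supplies $y \in f^m W^u_\delta(x)$ with $d(y, z) < 2\eta$. For $\eta$ below the Smale-space bracket constant, the point $w := [y, z]$ is well defined, and SS1--SS2 together with continuity of the bracket at $(y,y) = y$ give $w \in W^u(y) \cap W^s(z)$ with both $d(w,y)$ and $d(w,z)$ controlled by $\eta$. Since $y$ lies on the global unstable leaf $W^u(f^m x)$, so does $w$; since $f^m W^u_\delta(x)$ is a relative neighbourhood of $y$ inside that leaf, taking $\eta$ small enough forces $w \in f^m W^u_\delta(x)$, and then $z \in W^s_\epsilon(w)$ completes the covering.

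The only real obstacle is the bookkeeping in this last step: one must guarantee that the bracket point $w$ lies in the \emph{local} piece $f^m W^u_\delta(x)$ and not merely on the ambient global unstable manifold. This is the single point where the distinction between local and global unstable leaves genuinely matters, and it is settled by choosing $\eta$ small enough relative to $\delta$ so that $w$ remains within the leafwise distance $\delta$ of $y$.
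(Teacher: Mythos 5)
The paper's own proof of Lemma \ref{cover0} is a single sentence (``an immediate consequence of topological mixing and the local product structure''), and your argument is exactly a fleshed-out version of that idea: apply mixing to the open rectangle $[W^u_{\delta'}(x),W^s_{\delta'}(x)]$ and use stable contraction (SS4, propagated by SS3) to transfer $\eta$-density from $f^m R_x$ to $f^m W^u_{\delta'}(x)$. That half is sound, up to harmless bookkeeping: with $\eta=\epsilon/2$ you only get $3\epsilon/2$-density (take $\eta=\epsilon/3$), and since mixing gives $f^nR_x\cap B(z_i,\eta)\neq\emptyset$ for \emph{all} $n\ge N_i$, you are free to enlarge $m$ beyond $\max_i N_i$ so that $\lambda^m$ times the stable diameter of $R_x$ is also below $\eta$.

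Two slips in the second half should be repaired. First, with the paper's conventions ($W^s(a)=\{q: q=[a,q]\}$, $W^u(b)=\{q: q=[q,b]\}$), SS1 and SS2 give $[y,z]\in W^s(y)\cap W^u(z)$ — the memberships you claim are swapped. The point you want is $w:=[z,y]$, which lies in $W^u(y)\cap W^s(z)$; then $[w,z]=z$ by SS1, so $z\in W^s_\epsilon(w)$ once $d(w,z)$ is small. Second, the claim that $f^mW^u_\delta(x)$ is a relative neighbourhood of $y$ inside its unstable leaf is false when $y=f^my_0$ with $d(y_0,x)$ close to $\delta$, and no uniform choice of $\eta$ fixes this for the fixed radius $\delta$; ``$w$ within leafwise distance $\delta$ of $y$'' is also not the relevant condition. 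The correct repair is already implicit in your construction: the dense points are $y=f^ma_i$ with $a_i\in W^u_{\delta'}(x)$ for $\delta'<\delta$ (say $\delta'=\delta/2$). Since $w\in W^u_{\mathrm{loc}}(y)$ with $d_u(w,y)$ small, SS5 (via SS3) gives that the backward orbit of $f^{-m}w$ stays within a small distance of that of $a_i$, which in turn stays within $\delta'$ of the backward orbit of $x$; by the backward-orbit characterisation of $W^u_\delta(x)$ this forces $f^{-m}w\in W^u_\delta(x)$, i.e.\ $w\in f^mW^u_\delta(x)$, and the covering follows. With these corrections your proof is a valid implementation of the paper's one-line argument.
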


\begin{proof}
This is {\color{black} an immediate} consequence of the topological mixing assumption and the local product structure for Smale spaces.
\end{proof}

To get a lower bound on the growth rate in Proposition \ref{pressure unstab}, given $\epsilon > 0$ and $n \geq 1$ we want to 
construct a well chosen $(n, 2\epsilon)$-spanning set. 
We begin by choosing a {\color{black} suitable} covering of $ f^{n+m} W_\delta^u(x)$
by $\epsilon$-balls 
\begin{equation*}
B_{d_u}(y_i,  \epsilon) \hbox{ : } i=1, \cdots, N:=N(n+m, \epsilon)
\end{equation*}
contained within the unstable manifold with respect to the induced metric $d_u$ and let $A_{\epsilon} : = \{y \in f^{n+m} W_\delta^u(x) \text{ : } \nexists z \in W^u(f^{n+m}x) \text{\textbackslash} f^{n+m} W_\delta^u(x) \text{ with } d_u(y,z) < \epsilon/2\}$.
We can choose a maximal set $S= \{y_1, \cdots, y_{N(n+m,\epsilon)}\}$ with the additional property  that $d_u(y_i,y_j) > \epsilon/2$ for $i \neq j$ and $y_i \in A_\epsilon$. 
By our choice of $S$ we have that 
\begin{equation*}
A_{\epsilon} \subset \bigcup_{i=1}^{N(n+m,\epsilon)} B_{d_u}(y_i,  \epsilon/2).
\end{equation*}
By the triangle inequality we have that 
\begin{equation*}
f^{n+m} W_\delta^u(x) \subset \bigcup_{i=1}^{N(n+m,\epsilon)} B_{d_u}(y_i,  \epsilon).
\end{equation*}

\noindent
Since $B_{d_u}(f^{-(n+m)}(y_i),n+m+1, \frac{\epsilon}{4})\cap B_{d_u}(f^{-(n+m)}(y_j), n+m+1, \frac{\epsilon}{4}) = \emptyset$ for $i \neq j$, then
the disjoint union satisfies,
\begin{equation} \label{rectangle}
\bigcup_{i=1}^{N(n+m,\epsilon)} B_{d_u}(f^{-(n+m)}(y_i),n+m+1,  \epsilon/4) \subset W_\delta^u(x).
\end{equation}

We again use the property that 
$
f^n: f^{m}W^u_\delta(x) \to f^{n+m}W^u_\delta(x)
$
 locally expands distance along the unstable 
 manifold. 
In particular, this means that  the preimages $x_i := f^{-n} y_i \in  f^m(W_\delta^u(x))$ ($i=1, \cdots, N$) form an $(n, 2\epsilon)$-spanning set. [To see this we use Lemma \ref{cover0}, for  any point  $z\in \textcolor{black}{X}$  we can choose a point $y \in f^m(W^u_\delta(x))$ with $z \in W^s_\epsilon(y)$ and observe that $d(f^jz, f^jy) < \epsilon$ for $0\leq j \leq n$.] We can then choose an $x_i$ such that $\textcolor{black}{d_n(y,x_i)} < \epsilon$ since $f^n$ is locally expanding along unstable manifolds. In particular,  by the triangle inequality 
\begin{equation*}
d(f^j z, f^jx_i)  \leq d(f^j z, f^jy)+d(f^j y, f^jx_i) < 2\epsilon
\end{equation*}
 for $0\leq j \leq n-1$.

We will now use the construction of the {\color{black} points} $\{x_i\}$
{\color{black} to get the required lower bound.}
We {\color{black} first} require the following simple inequality
\begin{align*}
e^{S_{n}G_2(x_i)} & = e^{S_{n+m}G_2(f^{-m}(x_i)) - S_mG_2(f^{-m}(x_i))}\\
					& \leq e^{S_{n+m}G_2(f^{-m}(x_i)) + m ||G_2||_{\infty}}.
\end{align*}
For ease of notation, {\color{black} set} $\overline{B}(x_i) = B_{d_u}(f^{-m}(x_i),n+m+1, \frac{\epsilon}{4})$. Therefore,
\begin{align*}
\sum_{i=1}^N e^{S_nG_2(x_i)} & = \sum_{i=1}^N \int_{\overline{B}(x_i)} e^{S_nG_2(x_i)} \mu_{G_1}^u(\overline{B}(x_i))^{-1} d\mu_{G_1}^u(y) \\
						& \leq e^{m ||G_2||_{\infty}}\sum_{i=1}^N \int_{\overline{B}(x_i)} e^{S_{n+m}G_2(f^{-m}(x_i))} \mu_{G_1}^u(\overline{B}(x_i))^{-1} d\mu_{G_1}^u(y) \\
						& \leq e^{m ||G_2||_{\infty} + (n+m)\tau} \sum_{i=1}^N \int_{\overline{B}(x_i)} e^{S_{n+m}G_2(y)} \mu_{G_1}^u(\overline{B}(x_i))^{-1} d\mu_{G_1}^u(y),\\
						& \leq e^{m ||G_2||_{\infty} + (n+m)\tau + (n+m+1)P(G_1)} K \sum_{i=1}^N \int_{\overline{B}(x_i)} e^{S_{n+m}G_2(y) - S_{n+m+1}G_1(f^{-m}(x_i))} d\mu_{G_1}^u(y) \\
						& \leq e^{m ||G_2||_{\infty} + 2(n+m+1)\tau + (n+m+1)P(G_1) } K \sum_{i=1}^N \int_{\overline{B}(x_i)} e^{S_{n+m}G_2(y) - S_{n+m+1}G_1(y)} d\mu_{G_1}^u(y).
\end{align*}
Moreover, by (\ref{rectangle}) {\color{black} we can bound}
\begin{align*}
\sum_{i=1}^N \int_{\overline{B}(x_i)} e^{S_{n+m}G_2(y) - S_{n+m}G_1(y)} d\mu_{G_1}^u(y) \leq \int_{W_\delta^u(x)} e^{S_{n+m}G_2(y) - S_{n+m}G_1(y)} d\mu_{G_1}^u(y).
\end{align*}

\noindent
Letting $L(n) = e^{m ||G_2||_{\infty} + 2(n+m)\tau + P(G_1)+||G_1||_{\infty}}K$, we have
\begin{align*}
Z_{0,G_2}(n,2\epsilon) & \leq L(n) \int_{W_\delta^u(x)} e^{S_{n+m}G_2(y) - S_{n+m}G_1(y) + (n+m)P(G_1)} d\mu_{G_1}^u(y)
\end{align*}
and thus 
$$P(G_2)  \leq 2\tau + \lim_{n \rightarrow \infty} \frac{1}{n+m} \log \int_{W_\delta^u(x)} e^{S_{n+m}G_2(y) - S_{n+m}G_1(y) + (n+m)P(G_1)} d\mu_{G_1}^u(y).
$$
Again $\tau > 0$ {\color{black} can be chosen  arbitrarily small} and so
\begin{equation*}
P(G_2)  \leq  \lim_{n \rightarrow \infty} \frac{1}{n} \log  \int_{W_\delta^u(x)} e^{S_nG_2(y) - S_nG_1(y) + nP(G_1)} d\mu_{G_1}^u(y).
\end{equation*}
This concludes the proof.
\end{proof}

\section{Proof of Theorem \ref{transform main}}

In this section we will {\color{black} complete the} proof of Theorem \ref{transform main}. 
The proof 
{\color{black} follows the general  lines of}
the proof of Theorem 1.2 in \cite{PaPo}.


\begin{proof}
{\color{black} 
We begin by observing that  If we were to replace the potential $G_2$ by $G_2 + P(G_1)$ then the measures $\lambda_{n,G_2 - G_1} = \lambda_{n,G_2 - G_1 + P(G_1)}$. \textcolor{black}{Thus when we write  $\lambda_{n,G_2 - G_1}$ we are really considering $\lambda_{n,G_2 - G_1 + P(G_1)}$.}}

By Alaoglu's theorem on the weak star compactness of the space of probability measures, the measures $\mu_{n,G_2 - G_1}$ have a weak star convergent subsequence to some measure $\mu$.
Moreover, for  any continuous $F: X \to \mathbb R$ we can {\color{black} bound}
\begin{align*}
&\bigg|\int F d\mu_{n,G_2 - G_1} - \int F\circ  f  d\mu_{n,G_2 - G_1} \bigg|  \cr
&=
 \bigg|\frac{1}{n}\sum_{k=0}^{n-1} \int F\circ f^k d\lambda_{n,G_2 - G_1}
- \frac{1}{n}\sum_{k=0}^{n-1} \int F\circ f^{k+1} d\lambda_{n,G_2 - G_1}\bigg|\cr
&\leq \frac{2\|F\|_\infty}{n} \to 0 \hbox{ as } n \to +\infty\cr
\end{align*}
and, in particular,  one easily sees that $\mu$ is $f$-invariant. 

For convenience, {\color{black} we denote}
\begin{equation*}
Z_{n}^{G_2,G_1} = \int_{W_\delta^u(x)} e^{S_nG_2(y) - S_nG_1(y) +nP(G_1)} d\mu_{G_1}^u(y)
\end{equation*}
and for $A \subset X$ let,
\begin{equation*}
K_{n,A}^{G_2,G_1} = \int_{W_\delta^u(x) \cap A} e^{S_nG_2(y) - S_nG_1(y) +nP(G_1)} d\mu_{G_1}^u(y).
\end{equation*}

\begin{definition}
Given a finite partition $\mathcal P = \{P_i\}_{i=1}^N$ we say that it has size $\epsilon > 0$ if $\sup_{i}\left\{ \hbox{\rm diam}(P_i)  \right\}< \epsilon$.
\end{definition}

By Lemma \ref{Holder Lemma}, for any $\tau > 0$ there is an $\epsilon > 0$ small enough, such that if we choose a partition $\mathcal{P}$ of size $\epsilon > 0$, then for all $x,y \in A \in \bigvee_{i=0}^{n-1} f^{-i} \mathcal{P}$, we have,
\begin{equation}
|S_n G_k(x) - S_n G_k(y)| \leq n \tau
\end{equation}
for $k = 1,2$.

{\color{black} Choosing} a partition of size $\epsilon>0$,  for each element of the refined partition we can 
{\color{black} choose} an $x_A \in A \in \bigvee_{i=0}^{n-1} f^{-i} \mathcal{P}$. We now find a 
{\color{black} convenient}
 form for the integral $\int_X G_2 d\mu_{n,G_2 - G_1}$. 
 First {\color{black} we can write}
\begin{equation*}
\int_{W_\delta^u(x)} G_2(y) d\lambda_{n,G_2 - G_1}(y) = \frac{e^{nP(G_1)}}{Z_{n}^{G_2,G_1}} \int_{W_\delta^u(x)} e^{S_n(G_2 - G_1)(y)} G_2(y) d\mu_{G_1}^u(y)
\end{equation*}
{\color{black} and then}
\begin{equation*}
\int_{f^i(W_\delta^u(x))} G_2(y) df^i_*\lambda_{n,G_2 - G_1}(y) = \frac{e^{nP(G_1)}}{Z_{n}^{G_2,G_1}} \int_{W_\delta^u(x)} e^{S_n(G_2 - G_1)(y)} G_2(f^i(y)) d\mu_{G_1}^u(y).
\end{equation*}
Recalling the definition of $\mu_{n,G_2 - G_1}$ {\color{black} we can write}
\begin{align} \label{Smale integral calc}
\int_{X} G_2(y) d\mu_{n,G_2 - G_1}(y) & = \frac{e^{nP(G_1)}}{n Z_{n}^{G_2,G_1}} \int_{W_\delta^u(x)} e^{S_n(G_2 - G_1)(y)} S_nG_2(y) d\mu_{G_1}^u(y) \nonumber\\
												& = \frac{e^{nP(G_1)}}{n Z_{n}^{G_2,G_1}} \sum_{A \in \bigvee_{i=0}^{n-1} f^{-i} \mathcal{P}} \int_{W_\delta^u(x) \cap A} e^{S_n(G_2 - G_1)(y)} S_nG_2(y) d\mu_{G_1}^u(y)\nonumber \\
			&\textcolor{black}{	 \geq \frac{e^{nP(G_1)}}{n Z_{n}^{G_2,G_1}} \sum_{A \in \bigvee_{i=0}^{n-1} f^{-i} \mathcal{P}} \bigg(S_nG_2(x_A) - n \tau \bigg) \int_{W_\delta^u(x) \cap A} e^{S_n(G_2 - G_1)(y)} d\mu_{G_1}^u(y)} \nonumber \\
			& \textcolor{black}{= - \tau + \frac{1}{n} \sum_{A \in \bigvee_{i=0}^{n-1} f^{-i} \mathcal{P}} \frac{K_{n,A}^{G_2,G_1}}{Z_{n}^{G_2,G_1}} S_nG_2 (x_A).}
\end{align}

{\color{black} We next consider}
 the entropy of $\mu_{n,G_2 - G_1}$.
For $A \in \bigvee_{i=0}^{n-1} T^{-i} \mathcal{P}$, consider
\begin{align*}
\log \int_{W_\delta^u(x) \cap A} e^{S_n(G_2 - G_1)(y)} d\mu_{G_1}^u(y) & \leq \log \bigg(e^{2n \tau} \int_{W_\delta^u(x) \cap A} e^{S_n(G_2 - G_1)(x_A)} d\mu_{G_1}(y) \bigg) \\
			& = 2n\tau + S_n(G_2 - G_1)(x_A) + \log \mu_{G_1}^u(W_\delta^u(x) \cap A).  
\end{align*}
Since $\mathcal{P}$ has size $\epsilon$ then $W_\delta^u(x) \cap A \subset B_{d_u}(x_A,n, \epsilon)$. Using the conditional Gibbs property of $\mu_{G_1}^u$ we have,
\begin{align*}
\mu_{G_1}^u(W_\delta^u(x) \cap A) & \leq K e^{S_nG_1(x_A) - nP(G_1)}.
\end{align*}

\noindent
{\color{black} In particular, } this shows
\begin{align} \label{K_n bound}
\log K_{n,A}^{G_2,G_1} & \leq nP(G_1) + 2n \tau + S_n(G_2 - G_1)(x_A) + \log K + S_nG_1(x_A) - nP(G_1) \nonumber \\
							& = S_n G_2(x_A) + \log K + 2n \tau,
\end{align}
where $K > 0$ is independent of $n$ and $A$. Working from the definition of the entropy {\color{black} we can write}
\begin{align*}
H_{\lambda_{n,G_2 - G_1}} \bigg(\bigvee_{r=0}^{n-1} f^{-h}\mathcal{P} \bigg) &
 = - \sum_{A \in \bigvee_{r=0}^{n-1} f^{-h}\mathcal{P}} \lambda_{n,G_2 - G_1}(A) \log \lambda_{n,G_2 - G_1}(A) \\
			& = - \sum_{A \in \bigvee_{r=0}^{n-1} f^{-h}\mathcal P} \frac{K_{n,A}^{G_2,G_1}}{Z_{n}^{G_2,G_1}} \log \frac{K_{n,A}^{G_2,G_1}}{Z_{n}^{G_2,G_1}} \\
			& = \log Z_{n}^{G_2,G_1} - \sum_{A \in \bigvee_{r=0}^{n-1} f^{-h}\mathcal P} \frac{K_{n,A}^{G_2,G_1}}{Z_{n}^{G_2,G_1}} \log{K_{n,A}^{G_2,G_1}},
\end{align*}
where the last equality uses that, by definition $\sum_{A \in \bigvee_{r=0}^{n-1}f^{-h}\mathcal P} K_{n,A}^{G_2,G_1} =  Z_{n}^{G_2,G_1}.$

Using equation (\ref{K_n bound}) {\color{black} we have the lower bound}
\begin{align} \label{smale entropy calc}
H_{\lambda_{n,G_2 - G_1}} \bigg(\bigvee_{r=0}^{n-1} f^{-h}\mathcal{P} \bigg) & \geq Z_{n}^{G_2,G_1} - \sum_{A \in \bigvee_{r=0}^{n-1} f^{-h}\mathcal P} \frac{K_{n,A}^{G_2,G_1}}{Z_{n}^{G_2,G_1}} \bigg(S_nG_2 (x_A) + \log K + 2n\tau \bigg)\nonumber\\
			& = Z_{n}^{G_2,G_1} - \log K - 2n \tau - \sum_{A \in \bigvee_{r=0}^{n-1} f^{-h}\mathcal P} \frac{K_{n,A}^{G_2,G_1}}{Z_{n}^{G_2,G_1}} S_nG_2 (x_A).
\end{align}

Putting together (\ref{Smale integral calc}) and (\ref{smale entropy calc}),
\begin{align*}
H_{\lambda_{n,G_2 - G_1}} \bigg(\bigvee_{r=0}^{n-1} f^{-h}\mathcal{P} \bigg) & + n \int_{X} G_2(y)  d\mu_{n,G_2 - G_1}(y) \\ 
				&\geq Z_{n}^{G_2,G_1} - \log K -2n\tau - \sum_{A \in \bigvee_{r=0}^{n-1} f^{-h}\mathcal P} \frac{K_{n,A}^{G_2,G_1}}{Z_{n}^{G_2,G_1}} S_nG_2(x_A) \\  
			& \text{\hspace{10mm}} - {n\tau} + \sum_{A \in \bigvee_{i=0}^{n-1} f^{-i} \mathcal{P}} \frac{K_{n,A}^{G_2,G_1}}{Z_{n}^{G_2,G_1}} S_nG_2(x_A) \\
			& = Z_{n}^{G_2,G_1}  - \log K - 3n\tau.
\end{align*}
\noindent
We can now use this and an entropy estimate due to Misiurewicz \cite{misiurewicz} (stated in Lemma 4.5 \cite{parmenter2022constructing}) to write
$$
\begin{aligned}
q \log Z_{n}^{G_2,G_1} - q n \int_X G d \mu_{n,G_2 - G_1}- q (\log K + 3n\tau) & {\leq} q H_{\lambda_{n,G_2 - G_1}} \bigg(\bigvee_{r=0}^{n-1} f^{-h}\mathcal{P} \bigg)  \\ 
				&	 \leq n H_{\mu_{n,G_2 - G_1}} \bigg(\bigvee_{i=0}^{q-1} f^{-i}\mathcal{P} \bigg) + 2q^2 \log \text{Card}(\mathcal{P}), 
\end{aligned}
$$
which we {\color{black} can} rearrange to get,
$$
\begin{aligned}				
\frac{\log Z_{n}^{G_2,G_1}}{n} - \frac{\log K + 3n\tau}{n} - \frac{2q \log \text{Card}(\mathcal{P})}{n} & \leq \frac{H_{\mu_{n,G_2 - G_1}} \bigg(\bigvee_{i=0}^{q-1} f^{-i}\mathcal{P} \bigg)}{q} + \int_X G_2 d \mu_{n,G_2 - G_1}.
\end{aligned}
$$
Letting $n_k \to +\infty$,
\begin{align*}
P(G_2) & =  \lim_{k \rightarrow \infty} \frac{\log Z_{n_k}^{G_2,G_1}}{n_k}\cr
& \leq \lim_{k \rightarrow \infty} \bigg(\frac{H_{\mu_{n_k,G_2 - G_1}}\bigg(\bigvee_{i=0}^{q-1} f^{-i}\mathcal{P} \bigg)}{q} + \int_X G_2 d \mu_{n_k, G_2-G_1}\bigg) + 3\tau \\
			& =  \frac{H_{\mu}\bigg(\bigvee_{i=0}^{q-1} f^{-i}\mathcal{P} \bigg)}{q} + \int_X G_2 d \mu +3\tau,
\end{align*}
where we assume without loss of generality that the boundaries of the partition have zero measure.
Letting  $q \rightarrow \infty$,
\begin{equation}
 \label{final1}
P(G_2) \leq \textcolor{black}{h_{\mu}(\mathcal P)} + \int_X G_2 d \mu + 3\tau.
\end{equation}
Therefore, since $\tau$ {\color{black} can be chosen} arbitrarily and $\mu$ is an $f$-invariant probability measure, we see from the variational principle that the inequalities in equation (\ref{final1}) are actually equalities (since {$h_{\mu}(\mathcal P)  \leq  h({\mu})$}) and therefore we conclude that the measure  $\mu$ is an equilibrium state for $G_2$.
\end{proof}

\bibliographystyle{abbrv}

\bibliography{smale-alchemy-mark-rev}

\begin{thebibliography}{10}

\bibitem{bowen-periodic}
R.~Bowen.
\newblock Periodic points and measures for {A}xiom {A} diffeomorphisms.
\newblock {\em Trans. Amer. Math. Soc}, 154:377--397, 1971.

\bibitem{bowen-SLN}
R.~E. Bowen.
\newblock {\em Equilibrium states and the ergodic theory of {A}nosov diffeomorphisms}, volume 470 of {\em Lecture Notes in Mathematics}.
\newblock Springer Science \& Business Media, 2008.

\bibitem{CPZ-2}
V.~Climenhaga, Y.~Pesin, and A.~Zelerowicz.
\newblock Equilibrium measures for some partially hyperbolic systems.
\newblock {\em Journal of Modern Dynamics}, 16:155--205, 2020.

\bibitem{hr}
N.~T. Haydn and D.~Ruelle.
\newblock Equivalence of {G}ibbs and equilibrium states for homeomorphisms satisfying expansiveness and specification.
\newblock {\em Communications in mathematical physics}, 148(1):155--167, 1992.

\bibitem{hp}
M.~W. Hirsch and C.~C. Pugh.
\newblock Stable manifolds and hyperbolic sets.
\newblock In {\em Global Analysis (Proc. Sympos. Pure Math., Vol. XIV, Berkeley, Calif., 1968)}, pages 133--163, 1970.

\bibitem{leplaideur2000local}
R.~Leplaideur.
\newblock Local product structure for equilibrium states.
\newblock {\em Transactions of the American Mathematical Society}, 352(4):1889--1912, 2000.

\bibitem{misiurewicz}
M.~Misiurewicz.
\newblock A short proof of the variational principle for a $\mathbb{Z}_{+}^n$ action on a compact space.
\newblock {\em International conference on dynamical systems in mathematical physics, Ast\'erisque}, 40:147--157, 1976.

\bibitem{parmenter2022constructing}
D.~Parmenter and M.~Pollicott.
\newblock Constructing equilibrium states for some partially hyperbolic attractors via densities.
\newblock {\em arXiv preprint arXiv:2210.10701}, 2022.

\bibitem{PaPo}
D.~Parmenter and M.~Pollicott.
\newblock Gibbs measures for hyperbolic attractors defined by densities.
\newblock {\em Discrete and Continuous Dynamical Systems}, 42(8):3953--3977, 2022.

\bibitem{ruelle}
D.~Ruelle.
\newblock A measure associated with {A}xiom {A} attractors.
\newblock {\em American Journal of Mathematics}, pages 619--654, 1976.

\bibitem{ruelle2004thermodynamic}
D.~Ruelle.
\newblock {\em Thermodynamic formalism: the mathematical structure of equilibrium statistical mechanics}.
\newblock Cambridge University Press, 2004.

\bibitem{walters1975variational}
P.~Walters.
\newblock A variational principle for the pressure of continuous transformations.
\newblock {\em American Journal of Mathematics}, 97(4):937--971, 1975.

\bibitem{walters}
P.~Walters.
\newblock {\em An {I}ntroduction to {E}rgodic {T}heory}, volume~79 of {\em Graduate Texts in Mathematics}.
\newblock Springer New York, 1981.

\end{thebibliography}

\end{document}